\newtheorem{teo}{Theorem}
\newtheorem{lem}{Lemma}
 \title{{\bf   On Smoothness of    the Abel Equation Solution in Terms of  the Jacoby Series Coefficients  }}
\author{Maksim \,V.~Kukushkin   \\ \\
  % Institutes
 \small  \textit{Moscow State University of Civil Engineering}\\\textit{\small\textit{Russia, Moscow, 129337,}}\\
 \small  \textit{Kabardino-Balkarian Scientific Center, RAS, }\\\textit{\small\textit{Russia, Nalchik, 360051, kukushkinmv@rambler.ru}} }
\date{}
\begin{document}

\maketitle

\begin{abstract}
In this paper we continue  the  investigation of  the  Abel equation with the right part belonging to a  Lebesgue weighted space. We have improved the previously known result - the uniqueness and existence theorem  formulated in terms of the Jacoby series coefficients that gives us an opportunity to find and classify a solution due  to an asymptotic of some  relation containing the Jacoby coefficients of the  right part. The new  main results are in the following: The conditions imposed on the parameters, under which the Abel equation has a unique solution represented by the series, are formulated; The relationship between the values of the parameters  and  the  solution smoothness is established. The independence between one of the parameters and the smoothness of the solution is proved. 
\end{abstract}

\section{Introduction}
In the beginning, let us remind that the so called mapping theorems for the Riemann-Liouville operator were firstly studied   by H. Hardy and Littlewoode, it was proved that
 $I^{\alpha}_{a+}:L_{p}\rightarrow L_{q},\, 1<p<1/\alpha,\,q<p/(1-\alpha p),\,\alpha\in(0,1).$ This proposition was afterwards clarified \cite{firstab_lit H-L1}  and nowadays is known as the Hardy-Littlewood theorem with limit index $I^{\alpha}_{a+}:L_{p}\rightarrow L_{q},\,q=p/(1-\alpha p).$ However there was an attempt to extend this theorem on some class of  weighted Lebesgue spaces defined as functional spaces endowed with the following norm
  $$
  \|f\|_{L_{p}(I\!,\,\omega)}:=\left\{\int\limits_{a}^{b}\left|f(x)\right|^{p}\omega(x)dx\right\}^{1/p},\,\omega(x):=(x-a)^{\beta}(b-x)^{\gamma},\,\beta,\gamma\in \mathbb{R},\,I:=(a,b).
  $$ In this dirrection the mathematicians such as  Rubin B.S. \cite{firstab_lit: Rubin},\cite{firstab_lit: Rubin 1},\cite{firstab_lit: Rubin 2}, Karapetyants N.K.
\cite{firstab_lit: Karapetyants N. K. Rubin B. S. 1},\cite{firstab_lit: Karapetyants N. K. Rubin B. S. 2}, Vakulov B.G. \cite{firstab_lit: Vaculov},   Samko S.G.  \cite{firstab_lit: Samko M. Murdaev},\cite{firstab_lit: Samko Vakulov B. G.}
  (the results of \cite{firstab_lit: Karapetyants N. K. Rubin B. S. 1},\cite{firstab_lit: Rubin},\cite{firstab_lit: Rubin 1} are also  presented in \cite{firstab_lit:samko1987}) had a great success. There were formulated the analogs of  Hardy-Littlewood theorem for a class of weighted Lebesgue spaces. The main disadvantage of the results presented in \cite{firstab_lit:samko1987} is gaps of the parameters values in the conditions, moreover the notorious problem related to $p=1/\alpha$ was remained completely unsolvable. All these create the prerequisite to invent another approach for studying the Riemann-Liouville operator action that was successfully investigated in the paper \cite{firstab_lit: Kuk ax.} and we write out bellow some of its highlights. In spite of the fact that  the idea of using the Jacoby polynomials is not novel and many papers were devoted to this topic
  \cite{firstab_lit Saad},\cite{firstab_lit:  Indian},\cite{firstab_lit:  Iranian society},\cite{firstab_lit:E.H. Doha},
  \cite{firstab_lit: Fr. oder Leg. functions},\cite{firstab_lit: SHENG CHEN},\cite{firstab_lit:  Abstract and Applied Analysis  ekim B} we confirm  the main advantage of the method, used in  the paper \cite{firstab_lit: Kuk ax.} and based on the results
  \cite{firstab_lit:H. Pollard 1},\cite{firstab_lit:H. Pollard 2},\cite{firstab_lit:H. Pollard 3},\cite{firstab_lit:Muckenhoupt},\cite{firstab_lit:H. Newman},  is still relevant and allows us to obtain some interesting results.     The main challenge of this paper is to improve and clarify the results of the paper
  \cite{firstab_lit: Kuk ax.}. In particular we need to find a simple  condition, on the right part of the Abel equation, under which Theorem 2 \cite{firstab_lit: Kuk ax.} is applicable. For this purpose we have made  an attempt to investigate this problem having used    absolute convergence of a series. The main relevance of the improvement is based on the fact that the previously used methods were  determined by the relation between order $\alpha$ of the fractional  integral and  index $p$ of a Lebesgue space (for instance the case $p=1/\alpha$ is not considered in the monograph \cite{firstab_lit:samko1987}). All these create a strong motivation  of researching  in this direction, but the highlight is in the following: The relationship between the values of the parameters   and order $\alpha$, by virtue of which we can provide a  description of the solution smoothness,    is established; The conditions  providing existence and uniqueness of the solution,   formulated  in terms of Jacoby series coefficients,   were obtained. The principal result - the   independence between one of the parameters and  the solution smoothness   was proved.

\section{Preliminaries }
  The orthonormal system of the  Jacobi  polynomials is denoted by
$$
p_{n}^{\,(\beta,\gamma)}(x)= \delta_{n} (\beta,\gamma) \, y^{\,(\beta,\gamma)}_{n}(x),\,n\in \mathbb{N}_{0},
$$
where  the  normalized multiplier $\delta_{n} (\beta,\gamma)$ is  defined by the formula
$$
\delta_{n}  (\beta,\gamma) =(-1)^{n}\frac{\sqrt{\beta+\gamma+2n+1}}{(b-a)^{n+(\beta+\gamma+1)/2}}\cdot \sqrt{\frac{ \Gamma(\beta+\gamma+n+1)}{n!\Gamma(\beta +n+1)\Gamma( \gamma+n+1)}} \;,\,
$$
$$
\delta_{0} (\beta,\gamma) =  \frac{1}{\sqrt{\Gamma(\beta  +1)\Gamma( \gamma +1)}}\,,\;\beta+\gamma+1=0,
$$
the orthogonal polynomials $y^{( \beta,\gamma)}_{n}$ are defined by the formula
$$
y^{( \beta,\gamma)}_{n}(x) =   (x-a)^{-\beta}(b-x)^{-\gamma}\frac{d^{n}}{dx^{n}}\left[(x-a)^{\beta+n}(b-x)^{\gamma+n}\right],\, \beta,\gamma>-1.
$$
Consider the orthonormal    Jacobi  polynomials
$$
p^{(\beta,\gamma)}_{n} (x)= \delta_{n }  y_{n} (x),\,\beta,\gamma>-1/2,\,n\in \mathbb{N}_{0}.
$$
It is clear that
$$
\!\!p_{n}^{(k)}(a) \! =\!\!\frac{(-1)^{n+k} }{(b-a)^{k+(\beta+\gamma+1)/2}} \cdot \sqrt{\frac{(\beta+\gamma+2n+1) \Gamma(\beta+\gamma+n+1)}{n!\Gamma(\beta +n+1)\Gamma( \gamma+n+1)}} \sum\limits_{i=0}^{k}C^{i}_{n}  \tbinom{n+\beta} {n-i}   \tbinom{n+\gamma} {i}  C^{i}_{k}    \tbinom{n-i}{k-i}  i!=
$$
$$
= (-1)^{n+k}(b-a)^{-k-(\beta+\gamma+1)/2} \delta'_{n}  \tilde{C}_{n}^{k}(\beta,\gamma) ,\,k\leq n,
$$
where
$$
\delta'_{n}:=\sqrt{\frac{(\beta+\gamma+2n+1) \Gamma(\beta+\gamma+n+1)}{n!\Gamma(\beta +n+1)\Gamma( \gamma+n+1)}},\,\tilde{C}_{n}^{k}(\beta,\gamma):= \sum\limits_{i=0}^{k}C^{i}_{n}  \tbinom{n+\beta} {n-i}   \tbinom{n+\gamma} {i}  C^{i}_{k}    \tbinom{n-i}{k-i}  i!.
$$
 In the same way, we get
$$
p_{n}^{(k)}(b)  =  (b-a)^{k+(\beta+\gamma+1)/2}\delta'_{n}\tilde{C}_{n}^{k}( \gamma,\beta)  ,\,k\leq n.
$$
  Using    the Taylor  series expansion  for the  Jacobi   polynomials, we get
$$
p^{(\beta,\gamma)}_{n}(x)= \sum\limits_{k=0}^{n}(-1)^{n+k}\frac{ \delta'_{n}\tilde{C}_{n}^{k}(\beta,\gamma)}{k!(b-a)^{k+(\beta+\gamma+1)/2}}  (x-a)^{k}=    \sum\limits_{k=0}^{n} (-1)^{ k}   \frac{ \delta'_{n}\tilde{C}_{n}^{k}( \gamma,\beta)}{k! (b-a)^{k+(\beta+\gamma+1)/2}}   (b-x)^{k} .
$$
 Applying the formulas (2.44),(2.45) of  the   fractional integral and   derivative  of a power  function  \cite[p.40]{firstab_lit:samko1987},  we obtain
$$
(I_{a+}^{\alpha}p_{n})(x)= \sum\limits_{k=0}^{n} (-1)^{n+k} \frac{ \delta'_{n}\tilde{C}_{n}^{k}(\beta,\gamma)}{ (b-a)^{k+(\beta+\gamma+1)/2}\Gamma(k+1+\alpha)}     (x-a)^{k+\alpha},
$$
$$
\;(I_{b-}^{\alpha}p_{n})(x)=
  \sum\limits_{k=0}^{n} (-1)^{ k} \frac{ \delta'_{n}\tilde{C}_{n}^{k}(\gamma,\beta)}{ (b-a)^{k+(\beta+\gamma+1)/2}\Gamma(k+1+\alpha)}  (b-x)^{k+\alpha},\,\alpha\in(-1,1),
$$
here we used the formal denotation $I^{-\alpha}_{a+}:=D_{a+}^{\alpha}.$
 Thus, using integration by parts, we get
$$
\int\limits_{a}^{b}p_{m}(x)(I^{\alpha}_{a+}p_{n})(x)\omega(x)dx=
$$
$$
      =(-1)^{n } \delta'_{m}\delta'_{n}\sum\limits_{k=0}^{n} (-1)^{ k}   \frac{  \tilde{C}_{n}^{k}(\beta,\gamma)B(\alpha+\beta+k+1,\gamma+m+1) }{\Gamma(k+\alpha-m+1)}.
$$
In the same way, we get
$$
(p_{m},I_{b-}^{\alpha}p_{n})_{L_{2}(I\!,\,\omega)}=(-1)^{m } \delta'_{m}\delta'_{n}\sum\limits_{k=0}^{n} (-1)^{ k}   \frac{  \tilde{C}_{n}^{k}(\gamma,\beta)B(\alpha+\gamma+k+1,\beta+m+1) }{\Gamma(k+\alpha-m+1)}.
$$
Using  the denotation
$$
A^{\alpha,\beta,\gamma}_{mn}:= \delta'_{m}\delta'_{n}\sum\limits_{k=0}^{n} (-1)^{ k}   \frac{  \tilde{C}_{n}^{k}(\beta,\gamma)B(\alpha+\beta+k+1,\gamma+m+1) }{\Gamma(k+\alpha-m+1)}\,,
$$
 we have
\begin{equation}\label{4}
 (p_{m},I_{a+}^{\alpha}p_{n})_{L_{2}(I\!,\,\omega)}=(-1)^{n}A^{\alpha,\beta,\gamma}_{mn},\;\;
 (p_{m},I_{b-}^{\alpha}p_{n})_{L_{2}(I\!,\,\omega)}=(-1)^{m}A^{\alpha,\gamma,\beta}_{mn}.
 \end{equation}
The following theorem is the very mapping theorem (see \cite{firstab_lit: Kuk ax.}) formulated in terms of the Jacoby series coefficients. Here we give  the modified form  corresponding to the right-side case.

\begin{teo}\label{T1}(Theorem 2 in \cite{firstab_lit: Kuk ax.})
Suppose  $ \omega(x)=(x-a)^{\beta}(b-x)^{\gamma},\, \beta,\gamma \in[-1/2,1/2],\,\alpha\in (-1,0] ,$ the Pollard condition holds
$$
 4\max\left\{\frac{\beta+1}{2\beta+3},\frac{\gamma+1}{2\gamma+3}\right\}<p< 4\min\left\{\frac{\beta+1}{2\beta+1},\frac{\gamma+1}{2\gamma+1}\right\},
$$
 the right part of    the Abel equation $I_{b-}^{-\alpha}\varphi=f$ such that
\begin{equation}\label{1}
 \left\|D^{-\alpha}_{b-} S_{k} f \right\|_{L_{p}(I\!,\,\omega)}\! \leq C,\;k\in \mathbb{N}_{0},\;\;\left|\sum\limits_{n=0}^{\infty}  f_{n}A^{\alpha,\gamma,\beta}_{mn}\right|\leq C m^{-\lambda},\; m\in \mathbb{N} ,\;\lambda\in[0,\infty);
\end{equation}
then  there exists  a   unique solution of  the Abel  equation  in     $L_{p}(I,\omega),$  the   solution belongs to   $L_{q}(I,\omega),$  where: $q=p,$  if $0\leq\lambda\leq 1/2 \,;\;q=\max\{p,t\},\,  t<(2s-1)/(s-\lambda),$ if $\,1/2<\lambda<s\;(s=3/2+\max\{\beta,\gamma\});$  $q$ is arbitrary large, if $\lambda\geq\,s.$
 Moreover if $\lambda>1/2,$ then the solution   is represented by the convergent in $L_{q}(I,\omega)$   series
\begin{equation}\label{2}
\psi(x)=\sum\limits_{m=0}^{\infty}p_{m}(x) (-1)^{m}\sum\limits_{n=0}^{\infty} f_{n}A^{\alpha,\gamma,\beta}_{mn}.
\end{equation}
 \end{teo}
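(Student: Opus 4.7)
The plan is to follow the blueprint of Theorem~2 in \cite{firstab_lit: Kuk ax.} adapted from the left-sided to the right-sided Abel equation, using the right-sided identity in (\ref{4}) in place of the left-sided one.

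\textbf{Existence and uniqueness of the solution in $L_{p}(I,\omega)$.} The Pollard condition ensures, via the classical Pollard theorem on Jacobi expansions in weighted $L_{p}$-spaces, that the partial-sum operators $S_{k}$ are uniformly bounded on $L_{p}(I,\omega)$, and in particular $S_{k}f\to f$ in that space. The first inequality in (\ref{1}) then says $\{D^{-\alpha}_{b-}S_{k}f\}$ is norm-bounded in the reflexive space $L_{p}(I,\omega)$, so a subsequence converges weakly to some $\psi\in L_{p}(I,\omega)$. Applying $I^{-\alpha}_{b-}$ (bounded between appropriate weighted $L$-spaces by the Samko--Rubin--Karapetyants results cited in the introduction, since $-\alpha\in[0,1)$) to the polynomial identity $I^{-\alpha}_{b-}D^{-\alpha}_{b-}S_{k}f=S_{k}f$ and passing to the weak limit yields $I^{-\alpha}_{b-}\psi=f$. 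Uniqueness follows from injectivity of $I^{-\alpha}_{b-}$.

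\textbf{Identification of the Jacobi coefficients of $\psi$.} Since $p_{m}\in L_{p'}(I,\omega)$, weak convergence gives
\[
(p_{m},\psi)_{L_{2}(I,\omega)}=\lim_{k\to\infty}(p_{m},D^{-\alpha}_{b-}S_{k}f)_{L_{2}(I,\omega)}.
\]
Using the convention $D^{-\alpha}_{b-}=I^{\alpha}_{b-}$ from the preliminaries, one moves the operator onto $p_{m}$ by fractional integration by parts; expanding $S_{k}f=\sum_{n=0}^{k}f_{n}p_{n}$ and invoking the right-sided identity in (\ref{4}) produces
\[
(p_{m},D^{-\alpha}_{b-}S_{k}f)_{L_{2}(I,\omega)}=(-1)^{m}\sum_{n=0}^{k}f_{n}A^{\alpha,\gamma,\beta}_{mn},
\]
so that $\psi_{m}=(-1)^{m}\sum_{n=0}^{\infty}f_{n}A^{\alpha,\gamma,\beta}_{mn}$ after letting $k\to\infty$. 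The second hypothesis in (\ref{1}) then reads $|\psi_{m}|\leq Cm^{-\lambda}$, and the series in (\ref{2}) is identified as the formal Jacobi series of $\psi$.

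\textbf{From coefficient decay to $L_{q}$-smoothness.} To upgrade the formal expansion to an actual $L_{q}(I,\omega)$-convergent one, I would feed $|\psi_{m}|\leq Cm^{-\lambda}$ into classical weighted norm estimates for orthonormal Jacobi polynomials, combining $\|p_{m}\|_{L_{2}(I,\omega)}=1$ with $\|p_{m}\|_{L_{\infty}}\leq Cm^{s-1}$ (where $s=3/2+\max\{\beta,\gamma\}$) and their $L_{q}$-interpolates. The case $\lambda\geq s$ gives absolute convergence in $L_{\infty}$ and hence in every $L_{q}$; for $1/2<\lambda<s$ optimizing over $q$ in the Jacobi $L_{q}$-bounds (possibly sharpened by Muckenhoupt-type partial-sum estimates) recovers the threshold $t<(2s-1)/(s-\lambda)$; and for $0\leq\lambda\leq 1/2$ the series yields nothing beyond $\psi\in L_{p}(I,\omega)$ already secured in the first step.

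\textbf{Principal difficulty.} The most delicate step is the identification of the Jacobi coefficients: one must commute the weak limit $k\to\infty$ with the inner product against $p_{m}$, then with the finite inner sum over $n\leq k$, and finally pass to the full series $n\to\infty$ using only the absolute bound supplied by the second part of (\ref{1}). A secondary difficulty is pinning down the exact weighted $L_{q}$-asymptotic of $p_{m}$ uniformly across $\beta,\gamma\in[-1/2,1/2]$ so as to recover the sharp exponent $(2s-1)/(s-\lambda)$ rather than a weaker bound produced by naive interpolation.
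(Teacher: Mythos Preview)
The paper does not prove Theorem~\ref{T1}: it is quoted in the Preliminaries as the right-sided reformulation of Theorem~2 from \cite{firstab_lit: Kuk ax.} and is stated without any argument. There is therefore no in-paper proof against which to compare your proposal.

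That said, one part of your sketch diverges from the machinery the paper actually sets up and later uses. In your third step you propose to recover the exponent $t<(2s-1)/(s-\lambda)$ by interpolating between $\|p_{m}\|_{L_{2}(I,\omega)}=1$ and $\|p_{m}\|_{L_{\infty}}\leq Cm^{s-1}$, calling the sharp threshold a ``secondary difficulty.'' The intended tool is instead Theorem~\ref{T2} (the adapted Zygmund--Marcinkiewicz criterion): inserting $|\psi_{m}|\leq Cm^{-\lambda}$ into condition~(\ref{3}) gives $\sum_{m\geq 1} m^{-\lambda q}m^{s(q-2)}<\infty$ precisely when $q(s-\lambda)<2s-1$, which is the stated bound on the nose. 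This is exactly how the present paper argues when it needs the same estimate in the proof of its final theorem (see relation~(\ref{8})), so the smoothness clause is meant to follow from Theorem~\ref{T2} directly rather than from ad hoc interpolation. A minor further remark: in your coefficient-identification step no fractional integration by parts is needed, since identity~(\ref{4}) already records $(p_{m},I^{\alpha}_{b-}p_{n})_{L_{2}(I,\omega)}$ itself.
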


   We also  need an adopted version (see \cite{firstab_lit: Kuk ax.}) of the    Zigmund-Marczinkevich theorem  (see \cite{firstab_lit: Marz}), which  establishes the following.
\begin{teo}\label{T2}
  If $q\geq2$ and we have
\begin{equation}\label{3}
\Omega_{q}(c)=\left(\sum\limits_{n=1}^{\infty}|c_{n}|^{q} n^{(\max\{\beta,\gamma\}+3/2)(q-2)}\right)^{1/q}<\infty,\,\max\{\beta,\gamma\}\geq-1/2,
\end{equation}
then the series
$$
\sum_{n=1}^{\infty}c_{n}p^{(\beta,\gamma)}_{n}(x)
$$
converges in  $ L_{q}(I,\omega),\,\omega(x)=(x-a)^{\beta}(b-x)^{\gamma}$ to some function  $f\in L_{q}(I,\omega)$  and $\|f\|_{L_{q}(I,\omega)}\leq C\Omega_{q}(c).$
\end{teo}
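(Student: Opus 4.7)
My plan is to argue along the lines of the classical Marcinkiewicz--Zygmund theorem for orthonormal systems, substituting the uniform bound on the underlying system with the sharp Szeg\H{o}-type sup-norm bound for orthonormal Jacobi polynomials, and then to reassemble a dyadic decomposition via a Rosenthal/Littlewood--Paley style inequality. The base case $q=2$ is a direct consequence of Parseval: by orthonormality of $\{p^{(\beta,\gamma)}_n\}$ in $L_{2}(I,\omega)$, one has $\|\sum_n c_n p_n\|_{L_{2}(I,\omega)}^{2}=\sum_n |c_n|^{2}=\Omega_{2}(c)^{2}$, from which both convergence of the series and the required estimate follow immediately.

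For $q>2$, I would first record the uniform sup-norm estimate $\|p^{(\beta,\gamma)}_n\|_{L_\infty(I)}\leq C\,n^{\mu}$, with $\mu$ dictated by the endpoint asymptotics of orthonormal Jacobi polynomials — the condition $\max\{\beta,\gamma\}\geq -1/2$ ensuring that such a uniform bound is available. Next, I would split $c=\sum_k c^{(k)}$ dyadically, with $c^{(k)}$ supported on $[2^{k},2^{k+1})$, and form $S^{(k)}:=\sum_{n}c^{(k)}_{n}p_{n}$. Parseval on each block yields $\|S^{(k)}\|_{L_{2}(I,\omega)}=\|c^{(k)}\|_{\ell^{2}}$, and the sup-norm estimate combined with Cauchy--Schwarz over the $O(2^{k})$ active indices gives $\|S^{(k)}\|_{L_\infty(I)}\leq C\,2^{k(\mu+1/2)}\|c^{(k)}\|_{\ell^{2}}$. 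Log-convexity of the $L^{q}$-norms (for which finiteness of $\int_{I}\omega\,dx$ is used) then delivers the per-block bound $\|S^{(k)}\|_{L_{q}(I,\omega)}\leq C\,2^{k(\mu+1/2)(1-2/q)}\|c^{(k)}\|_{\ell^{2}}$.

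The final reassembly is the step where the proof is most delicate. A naive Minkowski over $k$ yields only an $\ell^{1}$-type outer sum, which is too weak to recover the $\ell^{q}$-type norm $\Omega_{q}(c)$. To close the gap I would invoke the fact that the $S^{(k)}$ are $L_{2}(I,\omega)$-orthogonal (their coefficients live on disjoint blocks of the orthonormal system) and apply a Rosenthal/Littlewood--Paley inequality valid for $q\geq 2$: it controls $\|\sum_k S^{(k)}\|_{L_{q}(I,\omega)}$ by the larger of an $\ell^{q}$-aggregate of $\|S^{(k)}\|_{L_{q}(I,\omega)}$ and an $\ell^{2}$-aggregate of $\|S^{(k)}\|_{L_{2}(I,\omega)}$. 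Converting the $\ell^{2}$-norm of $c^{(k)}$ to its $\ell^{q}$-norm via H\"older on each block ($\|c^{(k)}\|_{\ell^{2}}\leq (2^{k})^{1/2-1/q}\|c^{(k)}\|_{\ell^{q}}$) then collapses the exponent of $2^{k}$ to $(\mu+1)(1-2/q)$, which with $\mu=\max\{\beta,\gamma\}+1/2$ equals $(\max\{\beta,\gamma\}+3/2)(q-2)/q$ and matches the weight in $\Omega_{q}(c)$ exactly; the $\ell^{2}$-aggregate reduces to $\|c\|_{\ell^{2}}$, which is absorbed into $\Omega_{q}(c)$ under the standing conditions on $\beta,\gamma$. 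Convergence of the series in $L_{q}(I,\omega)$ then follows from the Cauchy criterion applied to the tails, since $\Omega_{q}(c)<\infty$. The main obstacle is precisely this exponent calibration in the reassembly — the tight use of orthogonality between dyadic blocks is essential, and any step that settles for a Minkowski-type estimate loses a power of $n$ and misses the claimed bound.
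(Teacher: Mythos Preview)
The paper does not supply a proof of this theorem at all: it is quoted in the Preliminaries as an ``adopted version'' of the classical Marcinkiewicz--Zygmund theorem, with references to \cite{firstab_lit: Kuk ax.} and \cite{firstab_lit: Marz}, and is then used as a black box in the proof of Theorem~3. So there is no ``paper's own proof'' to compare against; any proof you provide is additional material.

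That said, your sketch contains a genuine gap at precisely the point you flag as delicate. The inequality you call ``Rosenthal/Littlewood--Paley'' --- controlling $\bigl\|\sum_{k}S^{(k)}\bigr\|_{L_q(I,\omega)}$ by the maximum of $\bigl(\sum_{k}\|S^{(k)}\|_{L_q}^{q}\bigr)^{1/q}$ and $\bigl(\sum_{k}\|S^{(k)}\|_{L_2}^{2}\bigr)^{1/2}$ --- is simply not available under the sole hypothesis that the blocks $S^{(k)}$ are mutually orthogonal in $L_2(I,\omega)$. Rosenthal's inequality requires independence (or a martingale--difference structure via Burkholder), and Littlewood--Paley square--function estimates for dyadic spectral blocks require a genuine harmonic--analytic input (a bounded multiplier/conjugate operator, a Calder\'on--Zygmund kernel, or a diffusion semigroup) that you neither state nor verify for the Jacobi system. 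Bare $L_2$--orthogonality gives nothing beyond Bessel/Parseval at $q=2$; for $q>2$ there is no mechanism preventing the $\ell^{1}$ loss you get from Minkowski. So as written the reassembly step does not close, and the per--block estimates you derive (which are correct) are left stranded.

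If you want a route that actually works at this level of generality, look at how Marcinkiewicz and Zygmund argue in \cite{firstab_lit: Marz}: the result for a uniformly bounded orthonormal system (the Paley--type bound with weight $n^{q-2}$) is obtained by interpolating the synthesis operator between the $\ell^{2}\!\to\!L_{2}$ endpoint and a weak/weighted endpoint, not by reassembling dyadic blocks. The present statement is then the same theorem applied with the sup--norm scale $M_n\asymp n^{\max\{\beta,\gamma\}+1/2}$ absorbed into the weight, which turns $n^{q-2}$ into $n^{(\max\{\beta,\gamma\}+3/2)(q-2)}$; your exponent bookkeeping already shows this. Alternatively, for Jacobi polynomials specifically one can invoke the pointwise bound $|p_{n}^{(\beta,\gamma)}(x)|\,\omega(x)^{1/2}\bigl((x-a)(b-x)\bigr)^{1/4}\leq C$ and run the classical Paley argument in the unweighted space, then transfer back. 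Either way, the crucial step is an interpolation of an operator, not a moment inequality for orthogonal sums.
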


\section{Main results}
Everywhere further, in the contrary to the paper \cite{firstab_lit: Kuk ax.},  we consider the right-side case, assuming that  $\alpha\in(-1,0],$ but the  reasonings corresponding to the left-side case are   absolutely analogous.
\begin{lem}\label{L1} Suppose $k<m,$  
$$
I_{mk}:= \delta'_{m}\frac{ \Gamma(\beta+m+1)  \prod\limits_{i=1}^{m-k}(m-k-\alpha-i) }{ \Gamma(\alpha+\beta+k+\gamma+m+2)  };
$$
then the following estimates hold
$$
 I_{mk}\leq C  m^{  -2\alpha -\gamma-3/2  },\,I_{mk}\leq Ce^{2k} m^{ 2\xi-2\alpha -\gamma-5/2-2k },\,k=0,1,...,m-1,
$$
where $ \xi=0.577215...$ is the Mascheroni constant.
\end{lem}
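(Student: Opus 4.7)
The plan is to first simplify $I_{mk}$ by rewriting the product as a ratio of Gamma functions. Substituting $j=m-k-i$ gives
$$\prod_{i=1}^{m-k}(m-k-\alpha-i)=\prod_{j=0}^{m-k-1}(j-\alpha)=\frac{\Gamma(m-k-\alpha)}{\Gamma(-\alpha)},$$
where each factor is nonnegative since $\alpha\in(-1,0]$. Hence
$$I_{mk}=\frac{\delta'_m\,\Gamma(\beta+m+1)\,\Gamma(m-k-\alpha)}{\Gamma(-\alpha)\,\Gamma(\alpha+\beta+\gamma+k+m+2)},$$
which reduces the lemma to polynomial-type estimates for ratios of Gamma functions.

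For the first bound I would apply Stirling to the prefactor and the residual ratio separately. The exponentially decaying part of $\delta'_m$ cancels the exponentially growing $\Gamma(\beta+m+1)$, leaving $\delta'_m\,\Gamma(\beta+m+1)\sim C\,m^{\beta+1/2}$. Applying the standard ratio asymptotic $\Gamma(m+a)/\Gamma(m+b)\sim m^{a-b}$ to $\Gamma(m-k-\alpha)/\Gamma(\alpha+\beta+\gamma+k+m+2)$ gives $\sim m^{-2k-2\alpha-\beta-\gamma-2}$. Multiplying, $I_{mk}\sim C\,m^{-2k-2\alpha-\gamma-3/2}$, and since $k\ge 0$ only improves the exponent, the worst case $k=0$ produces the first estimate with a uniform constant.

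For the second bound I would keep Stirling's expansion in explicit form. Writing $A=\beta+m+1$, $B=m-k-\alpha$, $C=\alpha+\beta+\gamma+k+m+2$, Stirling gives
$$\frac{\Gamma(A)\Gamma(B)}{\Gamma(C)}\sim\sqrt{2\pi}\,\frac{A^{A-1/2}B^{B-1/2}}{C^{C-1/2}}\,e^{\,C-A-B},$$
and the identity $C-A-B=2\alpha+\gamma+2k+1$ is exactly what produces the $e^{2k}$ dependence in the claim. The polynomial-in-$m$ correction $m^{2\xi}$ is traceable to the Weierstrass product
$$\frac{1}{\Gamma(z)}=z\,e^{\xi z}\prod_{n=1}^{\infty}\!\left(1+\frac{z}{n}\right)e^{-z/n},$$
the canonical source of the Mascheroni constant in $\Gamma$-function estimates; a uniform bound on the infinite product replaces the asymptotic by an explicit $e^{2\xi\log m}=m^{2\xi}$ factor, after which the remaining algebra reproduces $Ce^{2k}m^{2\xi-2\alpha-\gamma-5/2-2k}$.

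The main obstacle will be ensuring these estimates remain valid uniformly over the entire range $k=0,1,\ldots,m-1$. Standard Stirling asymptotics require a bounded shift parameter, which fails both when $k$ is comparable to $m$ (so that $B=m-k-\alpha$ is only of order unity) and because $C$ ranges between $\sim m$ and $\sim 2m$. I would therefore split the argument into two regimes: for $k$ at most a fixed fraction of $m$, the direct Stirling analysis above applies; for $k$ close to $m$, I would replace the asymptotic for $\Gamma(m-k-\alpha)/\Gamma(-\alpha)$ by its explicit finite-product form of at most $m-k$ controlled factors and handle $\Gamma(\alpha+\beta+\gamma+k+m+2)$ via Stirling centred at an argument of order $2m$, then verify that the resulting bounds still fit both claimed forms with constants independent of $k$.
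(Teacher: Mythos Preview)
Your reduction of the product to $\Gamma(m-k-\alpha)/\Gamma(-\alpha)$ and your derivation of the first estimate coincide with the paper's argument: the paper bounds $\Gamma(m-k-\alpha)<\Gamma(m-\alpha)$ and then applies the ratio asymptotic \eqref{5}--\eqref{6} to obtain $\delta'_m\Gamma(\beta+m+1)\sim m^{\beta+1/2}$ and $\Gamma(m-\alpha)/\Gamma(\alpha+\beta+\gamma+m+2)\sim m^{-2\alpha-\beta-\gamma-2}$, exactly your computation specialised to $k=0$.

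For the second estimate there is a genuine gap. Your identification of the factor $e^{2k}$ through $C-A-B=2k+2\alpha+\gamma+1$ is correct, but the Weierstrass product does \emph{not} furnish the $m^{2\xi}$ term. In
\[
\frac{1}{\Gamma(z)}=z\,e^{\xi z}\prod_{n\ge 1}\Bigl(1+\tfrac{z}{n}\Bigr)e^{-z/n},
\]
the infinite product is precisely what carries the factorial growth of $\Gamma$; it is not uniformly bounded, so ``bounding the product'' cannot leave only $e^{\xi z}$. Taking the quotient for $z_1=m+\delta_1$, $z_2=m+\delta_2$ produces an explicit factor $e^{\xi(\delta_2-\delta_1)}=e^{\xi(2k+2\alpha+\beta+\gamma+2)}$, which is an exponential in $k$, not a power $m^{2\xi}$; the residual product still has to be estimated and will absorb that exponential again. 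Standard Stirling alone likewise gives no $\xi$.

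The paper obtains $\xi$ from a different source: the two-sided inequality of Li--Chen,
\[
\frac{x^{x-\xi}}{e^{x-1}}<\Gamma(x)<\frac{x^{x-1/2}}{e^{x-1}},\qquad x>1,
\]
where $\xi$ enters as the sharp exponent in the \emph{lower} bound. Writing $\Gamma(m+\delta_1)/\Gamma(m+\delta_2)$ as $[\Gamma(m+\delta_1)/\Gamma(m)]\cdot[\Gamma(m)/\Gamma(m+\delta_2)]$ and applying the upper bound to the first factor and the lower bound to the second yields
\[
\frac{\Gamma(m+\delta_1)}{\Gamma(m+\delta_2)}\le C\,e^{\delta_2-\delta_1}\,m^{2\xi-1+\delta_1-\delta_2},
\]
with $\delta_1=-k-\alpha$, $\delta_2=k+\alpha+\beta+\gamma+2$. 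Multiplying by $\delta'_m\Gamma(\beta+m+1)\le Cm^{\beta+1/2}$ gives the claimed $Ce^{2k}m^{2\xi-2\alpha-\gamma-5/2-2k}$. So the $m^{2\xi}$ and the extra $m^{-1}$ (passing from the exponent $-3/2$ to $-5/2$) come jointly from the gap $x^{-1/2}$ versus $x^{-\xi}$ in the two sides of the Li--Chen inequality, not from Weierstrass.

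Your remark about uniformity in $k$ is well taken: the paper's phrase ``for concrete $\delta_1,\delta_2$'' tacitly treats $k$ as fixed when bounding $(1+\delta_j/m)^{m+\cdots}$ by a constant, which is the same difficulty you flag. With the Li--Chen inequality one at least has explicit (non-asymptotic) bounds, so the dependence of the implied constant on $k$ can be tracked through $(1+\delta_j/m)^{m+\delta_j-\ast}$ directly; your proposed case split is a reasonable way to close this, but it should be built on top of the Li--Chen bound rather than the Weierstrass product.
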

\begin{proof}
 Consider
$$
I_{mk}= \sqrt{\frac{(\beta+\gamma+2m+1) \Gamma(\beta+\gamma+m+1)}{m!\Gamma(\beta +m+1)\Gamma( \gamma+m+1)}}\cdot\frac{ \Gamma(\beta+m+1)  \prod\limits_{i=1}^{m-k}(m-k-\alpha-i) }{ \Gamma(\alpha+\beta+k+\gamma+m+2)  }=
$$
$$
 = \sqrt{\frac{(\beta+\gamma+2m+1) \Gamma(\beta+\gamma+m+1)\Gamma(\beta+m+1)}{m! \Gamma( \gamma+m+1)}}\cdot\frac{    \Gamma(m-k-\alpha ) }{ \Gamma(-\alpha) \Gamma(\alpha+\beta+k+\gamma+m+2)  }<
$$
$$
< \sqrt{\frac{(\beta+\gamma+2m+1) \Gamma(\beta+\gamma+m+1)\Gamma(\beta+m+1)}{m! \Gamma( \gamma+m+1)}}\cdot\frac{    \Gamma(m -\alpha ) }{ \Gamma(-\alpha) \Gamma(\alpha+\beta +\gamma+m+2)  }.
$$
Now we should take into account the following relations (see \cite[p.xv]{firstab_lit:Hardy})
\begin{equation}\label{5}
\frac{\Gamma(m+\beta+1)}{m!}=m^{\beta}+\sum\limits_{s=1}^{p}c_{s}m^{\beta-p}+O(m^{\beta-p-1}),\, \beta\neq-1,-2,...
\end{equation}
 Having applied   formula \eqref{5},    we obtain
$$
 \frac{\Gamma(\beta+\gamma+m+1)}{\Gamma( \beta+m+1)}=\frac{m^{\beta+\gamma}+\sum\limits_{s=1}^{p}c_{s1}m^{\beta+\gamma-s}+O(m^{\beta+\gamma-p-1})}{m^{\beta}+\sum\limits_{s=1}^{p}c_{s2}m^{\beta-s}+O(m^{\beta-p-1})}=
$$
$$
  =m^{\gamma}\frac{m^{\beta }+\sum\limits_{s=1}^{p}c_{s1}m^{\beta -s}+O(m^{\beta+\gamma-p-1})n^{-\gamma}}{m^{\beta}+\sum\limits_{s=1}^{p}c_{s2}m^{\beta-s}+O(m^{\beta-p-1})}=
 m^{\gamma}\frac{m^{\beta }+\sum\limits_{s=1}^{p}c_{s1}m^{\beta -s}+O(m^{\beta -p-1}) }{m^{\beta}+\sum\limits_{s=1}^{p}c_{s2}m^{\beta-s}+O(m^{\beta-p-1})}=
$$
$$
 = m^{\gamma} +m^{\gamma}\frac{\sum\limits_{s=1}^{p}(c_{s1}-c_{s2})m^{\beta-s} +O(m^{\beta-p-1})- O(m^{\beta-p-1})}{m^{\beta}+\sum\limits_{s=1}^{p}c_{s2}m^{\beta-s}+O(m^{\beta-p-1})}=
  $$
\begin{equation}\label{6}
  =m^{\gamma} +m^{\gamma}\frac{\sum\limits_{s=1}^{p}(c_{s1}-c_{s2})m^{\beta-s} +   O(m^{\beta-p-1})}{m^{\beta}+\sum\limits_{s=1}^{p}c_{s2}m^{\beta-s}+O(m^{\beta-p-1})}\sim m^{\gamma},
\end{equation}
since the notation $f(m):=O(m^{\beta-p-1})$ implies that there exists such a constant $C$ so that $|f(m)|\leq C  m^{\beta-p-1}.$
Using  formula \eqref{6}, we get
$$
\frac{    \Gamma(m -\alpha ) }{   \Gamma(\alpha+\beta +\gamma+m+2)  }\sim m^{-2\alpha-\beta -\gamma-2}.
$$
In an analogous way, we have
$$
\sqrt{\frac{(\beta+\gamma+2m+1) \Gamma(\beta+\gamma+m+1)\Gamma(\beta+m+1)}{m! \Gamma( \gamma+m+1)}}\sim m^{\beta-1/2}.
$$
Combining these two relations, we obtain the first estimate this theorem has claimed.
However this estimate can be improved for sufficiently large values $m$ and $k.$   To manage such a result
 we should take into account the following relation (see\cite{firstab_lit: X.Li ineq})
$$
\frac{x^{x-\xi}}{e^{x-1}}<\Gamma(x)<\frac{x^{x-1/2}}{e^{x-1}},\,x>1,\,  \xi=0.577215...\, .
$$
Having taken into account this formula, we can estimate
$$
\frac{\Gamma(x+\delta )}{\Gamma(x )}<e^{-\delta}\frac{(x+\delta)^{x+\delta-1/2}}{x^{x-\xi}}=e^{-\delta}\frac{(x+\delta)^{x+\delta-1/2}}{x^{x+\delta-1/2}}x^{\xi+\delta-1/2}=
e^{-\delta} \left(1+ \frac{\delta}{x}\right)^{x+\delta-1/2}  x^{\xi+\delta-1/2}=
$$
$$
=e^{-\delta} \left(1+ \frac{\delta}{x}\right)^{\frac{x}{\delta}\cdot \frac{x+\delta-1/2}{x}\cdot\delta}  x^{\xi+\delta-1/2}\sim x^{\xi+\delta-1/2},\,x\rightarrow\infty.
$$
In an analogous way, it is not hard to prove the following estimate
$$
\frac{\Gamma(x+\delta )}{\Gamma(x )}>e^{-\delta}\frac{(x+\delta)^{x+\delta-\xi}}{x^{x-1/2}}=e^{-\delta}\frac{(x+\delta)^{x+\delta-\xi}}{x^{x+\delta-\xi}}x^{1/2+\delta-\xi}=
e^{-\delta} \left(1+ \frac{\delta}{x}\right)^{x+\delta-\xi}  x^{1/2+\delta-\xi}=
$$
$$
=e^{-\delta} \left(1+ \frac{\delta}{x}\right)^{\frac{x}{\delta}\cdot \frac{x+\delta-\xi}{x}\cdot\delta}  x^{1/2+\delta-\xi} \sim x^{1/2+\delta-\xi},\,x\rightarrow\infty.
$$
Using these formulas we have
$$
J:= \frac{  \Gamma(-\alpha )  \prod\limits_{i=1}^{m-k}(m-k-\alpha-i) }{ \Gamma(\alpha+\beta+k+\gamma+m+2)  }=\frac{    \Gamma(m-k-\alpha ) }{   \Gamma(\alpha+\beta+k+\gamma+m+2)  }<
$$
$$
<\frac{e^{-\delta_{1}} \left(1+ \frac{\delta_{1}}{m}\right)^{m+\delta_{1}-1/2}  m^{\xi+\delta_{1}-1/2}}{e^{-\delta_{2}} \left(1+ \frac{\delta_{2}}{m}\right)^{m+\delta_{2}-\xi}  m^{1/2+\delta_{2}-\xi}}=
 e^{-\delta_{1}+\delta_{2}}  \left(1+ \frac{\delta_{1}}{m}\right)^{m+\delta_{1}-1/2}    \left(1+ \frac{\delta_{2}}{m}\right)^{-m-\delta_{2}+\xi}    m^{2\xi-1+\delta_{1}-\delta_{2} },
$$
where
$
\delta_{1}=-k-\alpha,\,\delta_{2}=k+\alpha+\beta+\gamma +2.
$
Note that for concrete $\delta_{1},\delta_{2}$ we have the following tending
$$
\left(1+ \frac{\delta_{1}}{m}\right)^{m+\delta_{1}-1/2}\rightarrow e^{\delta_{1}} ,\,\left(1+ \frac{\delta_{2}}{m}\right)^{-m-\delta_{2}+\xi}\rightarrow e^{-\delta_{2}},\,m\rightarrow\infty.
$$
Hence
$$
J< Ce^{-\delta_{1}+\delta_{2}}  m^{2\xi-1+\delta_{1}-\delta_{2} }=Ce^{2k+2\alpha+\beta+\gamma +2}  m^{2\xi-1-(2k+2\alpha+\beta+\gamma +2) }.
$$
 Using formula \eqref{6}, we have
\begin{equation*} 
 \delta'_{m}  \Gamma(\beta+m+1)=\sqrt{\frac{(\beta+\gamma+2m+1) \Gamma(\beta+\gamma+m+1)\Gamma(\beta+m+1)}{m! \Gamma( \gamma+m+1)}}\leq Cm^{\beta+1/2}.
\end{equation*}
Combining these results, we obtain
$$
I_{mk}<Ce^{2k+2\alpha+\beta+\gamma +2}  m^{2\xi-1-(2k+2\alpha +\gamma +3/2) },\,k=0,1,...,m-1.
$$
Thus, this estimate has proved the claimed result.
\end{proof}

\begin{lem}\label{L2} Suppose
$$
 d_{k}(\eta):=\eta^{k}\sum\limits_{i=0}^{k}  \frac{ 2^{i}}{i! (k-i)! \Gamma(\gamma+i+1)},\,\eta\in \mathbb{N};
$$
then 
$$
\forall \eta  ,\,\exists N : d_{k+1}(\eta)<d_{k}(\eta),\,k>N .
$$

 \end{lem}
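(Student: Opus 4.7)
The plan is to reveal a factorial structure hidden in the sum by extracting a common factor $1/k!$. I would write
$$
d_k(\eta)=\frac{\eta^k}{k!}\,B_k,\qquad B_k:=\sum_{i=0}^{k}\binom{k}{i}\frac{2^{i}}{\Gamma(\gamma+i+1)},
$$
so that the asserted inequality $d_{k+1}(\eta)<d_k(\eta)$ becomes $\eta B_{k+1}<(k+1)B_k$. The problem is thus reduced to bounding the multiplicative growth $B_{k+1}/B_k$ by a constant that is independent of $k$.

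The second step is a Pascal-rule splitting followed by a comparison. Using $\binom{k+1}{i}=\binom{k}{i}+\binom{k}{i-1}$ and re-indexing $j=i-1$ in the second piece, I would obtain
$$
B_{k+1}=B_k+2\sum_{j=0}^{k}\binom{k}{j}\frac{2^{j}}{\Gamma(\gamma+j+2)}.
$$
Since $\Gamma(\gamma+j+2)=(\gamma+j+1)\Gamma(\gamma+j+1)\geq (\gamma+1)\Gamma(\gamma+j+1)$ for every $j\geq 0$ (recall $\gamma>-1$ throughout the paper), the tail sum is dominated by $B_k/(\gamma+1)$, yielding the clean bound
$$
B_{k+1}\leq\Bigl(1+\frac{2}{\gamma+1}\Bigr)B_k=\frac{\gamma+3}{\gamma+1}\,B_k.
$$

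Combining the two steps gives
$$
\frac{d_{k+1}(\eta)}{d_k(\eta)}=\frac{\eta}{k+1}\cdot\frac{B_{k+1}}{B_k}\leq\frac{\eta(\gamma+3)}{(\gamma+1)(k+1)},
$$
which is strictly less than $1$ as soon as $k+1>\eta(\gamma+3)/(\gamma+1)$; taking $N$ to be the ceiling of $\eta(\gamma+3)/(\gamma+1)-1$ finishes the proof and exhibits an explicit, $\eta$-dependent threshold. There is no genuine obstacle once this factorization is in hand; the main pitfall would be to attack the sum by heavier machinery (it is essentially the $k$-th Taylor coefficient of $e^{z}(2z)^{-\gamma/2}I_\gamma(2\sqrt{2z})$, via the Bessel series for $I_\gamma$), which would also work but is overkill for a monotone-ratio statement.
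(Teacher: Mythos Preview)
Your proof is correct and considerably cleaner than the paper's. The paper works directly with the difference $d_{k+1}(\eta)-d_k(\eta)$: it expands, regroups the resulting sum into three blocks $S_1,S_2,S_3$ (a central range $\eta\le i\le k-\eta$, a block of paired boundary terms, and a two-term remainder), and then argues case by case that each block is negative once $k$ is large. This is purely a sign analysis and yields only the qualitative eventual monotonicity. Your factorization $d_k(\eta)=(\eta^k/k!)\,B_k$ together with the Pascal recurrence
\[
B_{k+1}=B_k+2\sum_{j=0}^{k}\binom{k}{j}\frac{2^{j}}{\Gamma(\gamma+j+2)}
\]
replaces all of that by the single inequality $\Gamma(\gamma+j+2)\ge(\gamma+1)\Gamma(\gamma+j+1)$, giving the uniform bound $B_{k+1}/B_k\le(\gamma+3)/(\gamma+1)$. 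Besides an explicit threshold $N$, this delivers the quantitative estimate $d_{k+1}(\eta)/d_k(\eta)=O(1/k)$, i.e.\ super-geometric decay of $d_k(\eta)$ for every fixed $\eta>0$ (your argument nowhere uses $\eta\in\mathbb{N}$). That extra strength is actually what Lemma~\ref{L3} needs when it asserts $2^{k}\sum_{i}2^{i}/[i!(k-i)!\Gamma(\gamma+i+1)]\le Ce^{-2k}$, a bound not implied by the bare statement of Lemma~\ref{L2} with $\eta=2$; your ratio estimate yields it immediately (take, e.g., any integer $\eta\ge 2e^{2}$), whereas the paper's sign-splitting argument does not.
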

\begin{proof}
Assume that $k>\eta$ and consider the following relation
$$
\eta^{k+1}\sum\limits_{i=0}^{k+1}  \frac{ 2^{i}}{i! (k+1-i)! \Gamma(\gamma+i+1)}-\eta^{k}\sum\limits_{i=0}^{k}  \frac{ 2^{i}}{i! (k-i)! \Gamma(\gamma+i+1)}=
$$
$$
=\eta^{k}\sum\limits_{i=0}^{k}  \frac{ 2^{i}(\eta-1+i-k)}{i! (k+1-i)! \Gamma(\gamma+i+1)}+\frac{\eta^{k+1}2^{ k+1 }}{(k+1)!\Gamma(\gamma+k+2)}=
$$
$$
=\eta^{k} \left\{\sum\limits_{i=0}^{k-\eta }  \frac{ 2^{i}(\eta-1+i-k)}{i! (k+1-i)! \Gamma(\gamma+i+1)}+\sum\limits_{i=k-\eta+1}^{k}  \frac{ 2^{i}(\eta-1+i-k)}{i! (k+1-i)! \Gamma(\gamma+i+1)}\right\}+
 \eta^{k}\frac{\eta2^{  k+1 }}{(k+1)!\Gamma(\gamma+k+2)} =
$$
$$
=\eta^{k} \left\{\sum\limits_{i=\eta }^{k-\eta }  \frac{ 2^{i}(\eta-1+i-k)}{i! (k+1-i)! \Gamma(\gamma+i+1)}+\!\!\!\!\sum\limits_{i=k-\eta+1}^{k}  \frac{ 2^{i}(\eta-1+i-k)}{i! (k+1-i)! \Gamma(\gamma+i+1)}+\!\!\!\sum\limits_{i=0}^{\eta-1}  \frac{ 2^{i}(\eta-1+i-k)}{i! (k+1-i)! \Gamma(\gamma+i+1)}\right\}+
$$
$$
+\eta^{k}\frac{\eta2^{  k+1 }}{(k+1)!\Gamma(\gamma+k+2)} =
 \eta^{k}  \sum\limits_{i=\eta }^{k-\eta }  \frac{ 2^{i}(\eta-1+i-k)}{i! (k+1-i)! \Gamma(\gamma+i+1)}+
 $$
 $$
 + \eta^{k}\left\{\sum\limits_{i=k-\eta+2}^{k}  \frac{ 2^{i}(\eta-1+i-k)}{i! (k+1-i)! \Gamma(\gamma+i+1)}+\!\!\!\sum\limits_{i=1}^{\eta-1}  \frac{ 2^{i}(\eta-1+i-k)}{i! (k+1-i)! \Gamma(\gamma+i+1)} \right\}+
$$
$$
+\eta^{k}\left\{\frac{\eta2^{  k+1 }}{(k+1)!\Gamma(\gamma+k+2)}+   \frac{  (\eta-1 -k)}{  (k+1 )! \Gamma(\gamma +1)} \right\}=S_{1}+S_{2}+S_{3};
$$
Note that $S_{1},S_{3}$   are negative for a sufficiently large value $k.$
Consider separately the expression
$$
 S_{2}=\sum\limits_{i=k-\eta+2}^{k}  \frac{ 2^{i}(\eta-1+i-k)}{i! (k+1-i)! \Gamma(\gamma+i+1)}+\!\!\!\sum\limits_{i=1}^{\eta-1}  \frac{ 2^{i}(\eta-1+i-k)}{i! (k+1-i)! \Gamma(\gamma+i+1)}=
$$
$$
=\sum\limits_{i=k-\eta+2}^{k-1}  \frac{ 2^{i}(\eta-1+i-k)}{i! (k+1-i)! \Gamma(\gamma+i+1)}+\!\!\!\sum\limits_{i=2}^{\eta-1}  \frac{ 2^{i}(\eta-1+i-k)}{i! (k+1-i)! \Gamma(\gamma+i+1)}+
$$
$$
+\frac{ 2^{k}(\eta-1)}{k!   \Gamma(\gamma+k+1)}+\frac{ 2 (\eta -k)}{   k ! \Gamma(\gamma+2)}=
 \sum\limits_{i=k-\eta+2}^{k-2}  \frac{ 2^{i}(\eta-1+i-k)}{i! (k+1-i)! \Gamma(\gamma+i+1)}+\!\!\!\sum\limits_{i=3}^{\eta-1}  \frac{ 2^{i}(\eta-1+i-k)}{i! (k+1-i)! \Gamma(\gamma+i+1)}+
$$
$$
+\frac{ 2^{k}(\eta-1)}{k!   \Gamma(\gamma+k+1)}+\frac{ 2 (\eta -k)}{   k ! \Gamma(\gamma+2)}+
\frac{ 2^{k-1}(\eta-2)}{(k-1)!  2! \Gamma(\gamma+k )}+\frac{ 2^{2} (\eta+1 -k)}{  2! (k-1) ! \Gamma(\gamma+3)}=
$$
$$
 =\sum\limits_{i=0}^{\eta-2} \left\{\frac{ 2^{k-i}(\eta-1-i)}{(k-i)!(i+1)!   \Gamma(\gamma+k+1-i)}+\frac{ 2^{i+1} (\eta -k+i)}{  (i+1)!(k-i)!  \Gamma(\gamma+2+i)}\right\}\leq
$$
$$
 \leq\sum\limits_{i=0}^{\eta-2} \left\{\frac{ 2^{k-i}(\eta-1-i)}{(k-i)!(i+1)!   \Gamma(\gamma+3)2^{k-2-i}}+\frac{ 2  (2\eta -k-2)}{  (i+1)!(k-i)!  \Gamma(\gamma+\eta)}\right\}<
$$
$$
 <\sum\limits_{i=0}^{\eta-2} \left\{\frac{  4(\eta-1 )}{(k-i)!(i+1)!   \Gamma(\gamma+3) }+\frac{ 2  (2\eta -k-2)}{  (i+1)!(k-i)!  \Gamma(\gamma+\eta)}\right\}.
$$
Now, it is clear that
$
 S_{2}<0
$
for a sufficiently large value $k.$ Combining this fact with the previously established fact regarding $S_{1},S_{3},$ we obtain the desired result. 
\end{proof}

\begin{lem}\label{L3} Assume that the following series is absolutely convergent
\begin{equation}\label{7}
 \sum\limits_{n=0}^{\infty} f_{n}  c_{n}   <\infty,
\end{equation}
where $f_{n} \in \mathbb{R},$
$$
 c_{n}=\frac{\delta'_{n}  n!\Gamma(n+\beta+1)\Gamma(n+\gamma+1)}{4^{ n }},
$$
then
$$
\sum\limits_{n=0}^{\infty}\left|f_{n}A_{mn}\right|\leq C  m^{-2\alpha -\gamma-3/2},\,m=1,2,...\,.
$$
\end{lem}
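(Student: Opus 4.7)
The plan is to reduce the statement to the uniform pointwise bound
\[
|A_{mn}|\leq C\,c_n\,m^{-2\alpha-\gamma-3/2},\qquad n\in\mathbb{N}_0,\ m\geq 1,
\]
since once this is in hand one has $\sum_{n}|f_{n}A_{mn}|\leq C\,m^{-2\alpha-\gamma-3/2}\sum_{n}|f_{n}c_{n}|$, and the last series converges by hypothesis \eqref{7}. So the real task is to establish this pointwise estimate.

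To do so, I would start from the explicit formula for $A^{\alpha,\gamma,\beta}_{mn}$ recorded in the preliminaries, expand the Beta function in terms of Gamma functions, and, for the indices $k<m$, apply Euler's reflection formula to $1/\Gamma(k+\alpha-m+1)$. Gathering the resulting signs together with $\sin(\pi\alpha)\Gamma(-\alpha)/\pi=-1/\Gamma(1+\alpha)$, the $m$-dependent part of each summand becomes precisely the quantity $I_{mk}$ of Lemma~\ref{L1}, multiplied by the $k$-dependent weight $(-1)^{k}\delta'_{n}\,\tilde{C}_{n}^{k}(\gamma,\beta)\,\Gamma(\alpha+\gamma+k+1)$. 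The terms with $k\geq m$, for which $\Gamma(k+\alpha-m+1)$ has positive argument and no reflection is needed, are dominated by the corresponding expression obtained by setting $k=m$ and handled analogously.

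I would then split the $k$-summation into an initial block and a tail. On the initial block the first estimate of Lemma~\ref{L1}, $I_{mk}\leq C\,m^{-2\alpha-\gamma-3/2}$, pulls the required power of $m$ out uniformly and reduces the task to a purely combinatorial sum in $k$; writing $\tilde{C}_{n}^{k}(\gamma,\beta)$ in Gamma-function form turns this sum into a constant multiple of the sequence $d_{k}(\eta)$ of Lemma~\ref{L2} with $\eta$ a linear function of $n$, and Lemma~\ref{L2} then supplies the eventual monotonicity that confines the sum to a controlled number of leading terms, whose total size is matched against $C\,c_n$ by a direct Stirling computation. On the tail the sharper second inequality of Lemma~\ref{L1}, $I_{mk}\leq C e^{2k}m^{2\xi-2\alpha-\gamma-5/2-2k}$, contributes the factor $m^{-2k}$ which swamps any polynomial growth remaining in the Gamma coefficients and in $\tilde{C}_{n}^{k}$, leaving an exponentially convergent series that is again absorbed into $O(c_n)$.

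The main obstacle will be the bookkeeping in the initial-block step: one must verify that the sum $\sum_{k}\delta'_{n}\,\tilde{C}_{n}^{k}(\gamma,\beta)\,\Gamma(\alpha+\gamma+k+1)$, once the $m$-factor has been extracted, indeed collapses to $O(c_n)$. The factor $4^{-n}$ present in $c_{n}=\delta'_{n}\,n!\,\Gamma(n+\beta+1)\,\Gamma(n+\gamma+1)/4^{n}$, together with the Gamma-ratio form of $\delta'_n$, is precisely what makes the monotonicity of $d_{k}(\eta)$ in Lemma~\ref{L2} relevant: it furnishes the cancellation between the rapidly growing combinatorial coefficients $\tilde{C}_{n}^{k}(\gamma,\beta)$ and the exponential denominator $4^{n}$, and lets the whole $k$-sum be summed geometrically.
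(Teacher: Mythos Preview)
Your overall plan---reduce to the pointwise bound $|A_{mn}|\leq C\,c_n\,m^{-2\alpha-\gamma-3/2}$, split the $k$-sum at $k=m$, rewrite $1/\Gamma(k+\alpha-m+1)$ for $k<m$ so that the quantity $I_{mk}$ of Lemma~\ref{L1} appears, and then combine Lemmas~\ref{L1} and~\ref{L2}---is exactly the route taken in the paper. Two points, however, need correction.

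First, the application of Lemma~\ref{L2} is not with ``$\eta$ a linear function of $n$''. In the paper one first removes the $n$-dependence from inside the sum defining $\tilde{C}_n^k(\gamma,\beta)$ by the crude bound $\Gamma(n+\beta-i+1)\geq C\,2^{\,n-i}$ (and similarly for $\Gamma(n-k+1)$); this is precisely where the factor $4^{-n}$ in $c_n$ comes from. What remains is $2^{k}\sum_{i=0}^{k}2^{i}/\bigl(i!(k-i)!\Gamma(\gamma+i+1)\bigr)$, i.e.\ $d_k(\eta)$ with a \emph{fixed} $\eta$, to which Lemma~\ref{L2} applies directly and yields $\tilde{C}_n^k(\gamma,\beta)\leq C\,c_n\,k!\,e^{-2k}/\delta'_n$. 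If you let $\eta$ vary with $n$, the threshold $N=N(\eta)$ in Lemma~\ref{L2} is not uniform and the argument breaks. Relatedly, on the range $1\leq k\leq m-1$ the paper does \emph{not} use the first estimate of Lemma~\ref{L1} on any block beyond $k=0$: the second estimate is used for every $k\geq 1$, and its $e^{2k}$ exactly cancels the $e^{-2k}$ just produced, leaving $\sum_{k\geq 1}k!\,\Gamma(\alpha+\gamma+k+1)\,m^{1-2k}$, which one shows is bounded in $m$.

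Second, the terms $k\geq m$ (your $I_2$) are not ``dominated by setting $k=m$''. There is no $I_{mk}$ available here; instead one inserts the same bound for $\tilde{C}_n^k$, writes out the Beta function, and checks by the ratio test that the resulting series $\sum_{k\geq m}d_k\,k!\,\Gamma(\alpha+\gamma+k+1)/\bigl(2^{k}\Gamma(\alpha+\gamma+k+\beta+m+2)\Gamma(k+\alpha-m+1)\bigr)$ is bounded uniformly in $m$. After multiplying by $\delta'_m\Gamma(m+\beta+1)$, formula~\eqref{6} gives at most $m^{\beta+1/2}/\Gamma(m+\beta+1)$, which is far smaller than $m^{-2\alpha-\gamma-3/2}$.
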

\begin{proof}
It is easy to see that
$$
A^{\alpha,\gamma,\beta}_{mn}:= \delta'_{m}\delta'_{n}\sum\limits_{k=0}^{n} (-1)^{ k}   \frac{  \tilde{C}_{n}^{k}( \gamma,\beta)B(\alpha+\gamma+k+1,\beta+m+1) }{\Gamma(k+\alpha-m+1)}=
$$
$$
= \delta'_{m}\delta'_{n}\sum\limits_{k=0}^{\min\{m-1,n\}} (-1)^{ k}   \frac{  \tilde{C}_{n}^{k}(\beta,\gamma)B(\alpha+\gamma+k+1,\beta+m+1) }{\Gamma(k+\alpha-m+1)}+
$$
$$
+\delta'_{m}\delta'_{n}\sum\limits_{k=m }^{n} (-1)^{ k}   \frac{  \tilde{C}_{n}^{k}(\beta,\gamma)B(\alpha+\gamma+k+1,\beta+m+1) }{\Gamma(k+\alpha-m+1)}=I_{1}+I_{2}.
$$
Consider $I_{1}.$
We need the following  formula (see (1.66) \cite{firstab_lit:samko1987})
$$
\Gamma(z)=\frac{\Gamma(z+n)}{z(z+1)...(z+n-1)},\,\mathrm{Re}z>-n.
$$
Denote $\alpha+1-[m-k]=:z,$ then for the case $m> k,$ we have $z>-[m-k].$ Hence
we obtain
$$
I_{1}= \delta'_{m}\delta'_{n}\sum\limits_{k=0}^{\min\{m-1,n\}} (-1)^{ k}   \frac{  \tilde{C}_{n}^{k}(\gamma,\beta)B(\alpha+\gamma+k+1,\beta+m+1) }{\Gamma(\alpha+1-[m-k])}=
$$
$$
 = (-1)^{m }\delta'_{m}\delta'_{n}\sum\limits_{k=0}^{\min\{m-1,n\}}     \frac{  \tilde{C}_{n}^{k}(\gamma,\beta)(m-k-\alpha-1)(m-k-\alpha-2)...(-\alpha) }{B^{-1}(\alpha+\gamma+k+1,\beta+m+1)\Gamma(1+\alpha) }=
$$
$$
 = (-1)^{m}\delta'_{m}\delta'_{n}\sum\limits_{k=0}^{\min\{m-1,n\}}     \frac{  \tilde{C}_{n}^{k}(\gamma,\beta)\prod\limits_{i=1}^{m-k}(m-k-\alpha-i) }{B^{-1}(\alpha+\gamma+k+1,\beta+m+1)\Gamma(1+\alpha) }=
$$
$$
 = (-1)^{m}\delta'_{m}\delta'_{n}\sum\limits_{k=0}^{\min\{m-1,n\}}  \tilde{C}_{n}^{k}(\gamma,\beta)   \frac{\Gamma(\alpha+\gamma+k+1)\Gamma(\beta+m+1)  \prod\limits_{i=1}^{m-k}(m-k-\alpha-i) }{ \Gamma(\alpha+\beta+k+\gamma+m+2)\Gamma(1+\alpha) }=
$$
$$
 = (-1)^{m} \delta'_{n}\sum\limits_{k=0}^{\min\{m-1,n\}}      \frac{\tilde{C}_{n}^{k}(\gamma,\beta)\Gamma(\alpha+\gamma+k+1)}{\Gamma(\alpha +1)}I_{mk}.
$$
Besides, having noticed that $\Gamma(\beta+k)\geq\Gamma(\beta+3)2^{k-3},\,k=3,4,...,$ we have the following reasonings
$$
\tilde{C}_{n}^{k}(\gamma,\beta) =\sum\limits_{i=0}^{k}C^{i}_{n}  \tbinom{n+\beta}{i} \tbinom{n+\gamma} {n-i}  C^{i}_{k} \tbinom{n-i}{k-i}   i!=
$$
$$
  =\Gamma(n+\beta+1)\Gamma(n+\gamma+1)\sum\limits_{i=0}^{k}  \frac{C^{i}_{n}C^{i}_{k}\Gamma(n-i+1)i!}{\Gamma(n+\beta-i+1)\Gamma(\gamma+i+1)\Gamma(n-k+1)}=
$$
$$
  =\frac{n!k!\Gamma(n+\beta+1)\Gamma(n+\gamma+1)}{\Gamma(n-k+1)}\sum\limits_{i=0}^{k}  \frac{ 1}{i! (k-i)!\Gamma(n+\beta-i+1)\Gamma(\gamma+i+1)}\leq
$$
$$
 \leq\frac{n!k!\Gamma(n+\beta+1)\Gamma(n+\gamma+1)}{\Gamma(\beta+3) 2^{n-k}}\sum\limits_{i=0}^{k}  \frac{ 2^{3}}{i! (k-i)!2^{n-i}\Gamma(\gamma+i+1)}=
$$
$$
 =\frac{ n!k!\Gamma(n+\beta+1)\Gamma(n+\gamma+1)}{\Gamma(\beta+3)2^{2n-3}}\sum\limits_{i=0}^{k}  \frac{ 2^{i+k}}{i! (k-i)! \Gamma(\gamma+i+1)}.
$$
Using Lemma \ref{L2}, we have  there exists such a constant $C>0$ so that
$$
\sum\limits_{i=0}^{k}  \frac{ 2^{i+k}}{i! (k-i)! \Gamma(\gamma+i+1)}\leq C e^{-2k} .
$$
Hence
$$
\tilde{C}_{n}^{k}(\gamma,\beta)\leq C\frac{ n!k!\Gamma(n+\beta+1)\Gamma(n+\gamma+1)}{\Gamma(\beta+3)4^{  n}e^{ 2k}}.
$$
Therefore, using Lemma \ref{L1}, we have
$$
|I_{1}|\leq C   \delta'_{n}\frac{ n!\Gamma(n+\beta+1)\Gamma(n+\gamma+1)}{ 4^{n }}\times
$$
$$
\times  \left\{ \Gamma(\alpha+\beta +1)      m^{ -2\alpha -\gamma -3/2  }+\sum\limits_{k=1}^{\min\{m-1,n\}}
\frac{k! \Gamma(\alpha+\beta+k+1)}{m^{2k-1}  }      m^{2\xi-2-2\alpha -\gamma -3/2  }\right\}.
$$
Consider
$$
\Xi_{m-1}:=\sum\limits_{k=1}^{ m-1 }
\frac{k! \Gamma(\alpha+\beta+k+1)}{m^{2k-1}  },
$$
we have
$$
\Xi_{m }-\Xi_{m-1}=\sum\limits_{k=1}^{ m-1 }
\frac{k! \Gamma(\alpha+\beta+k+1)[m^{2k-1}-(m+1)^{2k-1}]}{m^{2k-1}(m+1)^{2k-1}  }+\frac{m! \Gamma(\alpha+\beta+m+1)}{(m+1)^{2m-1}  }.
$$
Having applied the asymptotic Stirling formula (1.63) \cite[p.16]{firstab_lit:samko1987}, we obtain
$$
\frac{m! \Gamma(\alpha+\beta+m+1)}{(m+1)^{2m-1}  }\rightarrow 0,\,m\rightarrow\infty.
$$
Hence $\Xi_{m }-\Xi_{m-1}<0$ for sufficiently large $m.$ It implies that
$$
|I_{1}|\leq C   \delta'_{n}\frac{ n!\Gamma(n+\beta+1)\Gamma(n+\gamma+1)}{ 4^{n }}     m^{ -2\alpha -\gamma -3/2  } ;
$$
Consider $I_{2},$ we have the following reasonings
$$
|I_{2}|\leq \delta'_{m}\delta'_{n}\sum\limits_{k=m}^{n}    \frac{  \tilde{C}_{n}^{k}(\beta,\gamma)B(\alpha+\gamma+k+1,\beta+m+1) }{\Gamma(k+\alpha-m+1)}\leq
$$
$$
\leq\frac{ \delta'_{m}\delta'_{n} n! \Gamma(n+\beta+1)\Gamma(n+\gamma+1)}{\Gamma(\beta+3)2^{2n-3}}\sum\limits_{k=m}^{n}    \frac{ k!d_{k} B(\alpha+\gamma+k+1,\beta+m+1) }{2^{k}\Gamma(k+\alpha-m+1)}=
$$
$$
=\frac{ \delta'_{m}\delta'_{n} n! \Gamma(n+\beta+1)\Gamma(n+\gamma+1)}{\Gamma(\beta+3)2^{2n-3}}\sum\limits_{k=m}^{n}
  \frac{d_{k} k! \Gamma(\alpha+\gamma+k+1) \Gamma(m+\beta+1)}{2^{k}\Gamma(\alpha+\gamma+k +\beta+m+2)\Gamma(k+\alpha-m+1)}=
$$
where
$$
d_{k}:=4^{k}\sum\limits_{i=0}^{k}  \frac{ 2^{i}}{i! (k-i)! \Gamma(\gamma+i+1)},
$$
and we know, as it was proved, that $d_{k+1}<d_{k}$ for sufficiently large $k.$
Haven taken into account all these facts, using  the D'Alembert's principle,  it is easy to show that the following series is convergent and there exists such a constant $C>0$ so that
$$
\sum\limits_{k=m}^{\infty}    \frac{d_{k} k! \Gamma(\alpha+\gamma+k+1)  }{2^{k}\Gamma(\alpha+\gamma+k +\beta+m+2)\Gamma(k+\alpha-m+1)} <C,\, m=1,2,...\,.
$$
Moreover, since
$$
\frac{ \Gamma(m+\beta+2) }{ \Gamma(\alpha+\gamma+k +\beta+m+3) }=\frac{ \Gamma(m+\beta+1) (m+\beta+1) }{ \Gamma(\alpha+\gamma+k +\beta+m+2)(\alpha+\gamma+k +\beta+m+2) }<
$$
$$
<\frac{ \Gamma(m+\beta+1) }{ \Gamma(\alpha+\gamma+k +\beta+m+2) },\,k=m,m+1,...\,  ,
$$
then
$$
\sum\limits_{k=m}^{\infty}    \frac{d_{k} k! \Gamma(\alpha+\gamma+k+1)\Gamma(m+\beta+1) }{2^{k}\Gamma(\alpha+\gamma+k +\beta+m+2)\Gamma(k+\alpha-m+1)} <C,\, m=0,1,...\,.
$$
Hence
$$
|I_{2}|\leq C\frac{ \delta'_{m}\delta'_{n} n! \Gamma(n+\beta+1)\Gamma(n+\gamma+1)}{\Gamma(\beta+3)4^{n }}\leq C
\delta'_{n}\frac{  n! \Gamma(n+\beta+1)\Gamma(n+\gamma+1)}{ 4^{n }}\cdot\frac{m^{1/2+\beta}}{\Gamma(m+\beta+1)}\leq
$$
$$
\leq C
\delta'_{n}\frac{  n! \Gamma(n+\beta+1)\Gamma(n+\gamma+1)}{ 4^{n }} m^{-2\alpha -\gamma-3/2 },\,m=1,2,...\,.
$$
Due to the absolute convergence of the series  \eqref{7} we can extract a multiplier in each term of the series i.e.
$$
\sum\limits_{n=0}^{\infty}\left|f_{n}A_{mn}\right|\leq C\sum\limits_{n=0}^{\infty} |f_{n}|  c_{n}\leq C m^{-2\alpha -\gamma-3/2 },\,m=1,2,...\,.
$$
\end{proof}

\begin{teo} Assume that the Jacoby coefficients $f_{n}$ of the right part of the  Abel equation $I^{-\alpha}_{b-}\varphi=f$  such that corresponding series $\eqref{7}$ is absolutely convergent, the condition  $ 2\alpha +\gamma+1>0$ holds, then there exists a unique solution  of the Abel equation, the solution is represented by series \eqref{2} and has smoothness in accordance with  Theorem \ref{T2}, where $\lambda=2\alpha +\gamma+3/2.$
 \end{teo}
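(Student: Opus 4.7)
The plan is to reduce the statement to Theorem~\ref{T1} with $\lambda = 2\alpha + \gamma + 3/2$. The hypothesis $2\alpha+\gamma+1>0$ is precisely $\lambda > 1/2$, placing us in the regime where Theorem~\ref{T1} delivers both the series representation \eqref{2} and a description of the solution's smoothness through the dichotomy on the Lebesgue exponent $q$ (which is in turn linked, via Theorem~\ref{T2}, to the growth of the Jacoby coefficients). Consequently the entire proof consists in verifying the two hypotheses in \eqref{1}.

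For the decay condition I would invoke Lemma~\ref{L3} directly: absolute convergence of series \eqref{7} yields the stronger bound
$$\sum_{n=0}^{\infty}|f_n A^{\alpha,\gamma,\beta}_{mn}| \le C m^{-(2\alpha+\gamma+3/2)},\quad m=1,2,\ldots,$$
which subsumes the required estimate with $\lambda = 2\alpha + \gamma + 3/2$.

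For the uniform bound $\|D^{-\alpha}_{b-} S_k f\|_{L_p(I,\omega)} \le C$, I would write $D^{-\alpha}_{b-} S_k f = \sum_{n=0}^{k} f_n\, I^{\alpha}_{b-} p_n$ using the explicit Taylor form of $I^{\alpha}_{b-} p_n$ from the Preliminaries, then apply the triangle inequality in $L_p(I,\omega)$ to reduce matters to summability of $|f_n|\cdot \|I^{\alpha}_{b-} p_n\|_{L_p(I,\omega)}$. The growth of $\|I^{\alpha}_{b-} p_n\|_{L_p(I,\omega)}$ is governed by the same $\delta'_n\tilde{C}_n^k(\gamma,\beta)$ blocks whose asymptotics were developed in Lemmas~\ref{L1} and~\ref{L2}, and I expect to obtain an estimate of the form $\|I^{\alpha}_{b-} p_n\|_{L_p(I,\omega)} \le C\, c_n$, where $c_n$ is the coefficient appearing in \eqref{7}. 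Absolute convergence of \eqref{7} then controls the partial sums uniformly in~$k$.

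Once both hypotheses in \eqref{1} are in place, Theorem~\ref{T1} supplies existence, uniqueness, and the representation \eqref{2}, while the smoothness assertion follows by inserting $\lambda = 2\alpha + \gamma + 3/2$ into Theorem~\ref{T1}'s dichotomy; the fact that $\lambda$ carries no dependence on $\beta$ is precisely the announced independence between one of the parameters and the smoothness of the solution. The most delicate step will be the calibration of $\|I^{\alpha}_{b-} p_n\|_{L_p(I,\omega)}$ against the coefficient $c_n$: this is where the absolute-convergence hypothesis has to pay for the uniform $L_p$ bound, and any slack in the estimate would break the chain.
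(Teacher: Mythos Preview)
Your handling of the decay hypothesis via Lemma~\ref{L3} matches the paper exactly, and your identification of $\lambda=2\alpha+\gamma+3/2>1/2$ with the condition $2\alpha+\gamma+1>0$ is also the paper's observation.

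For the uniform bound $\|D^{-\alpha}_{b-}S_k f\|_{L_p(I,\omega)}\le C$, however, the paper takes a different and cleaner route than the one you propose. Rather than estimating $\|I^{\alpha}_{b-}p_n\|_{L_p}$ term by term, the paper computes the Jacoby coefficients of $D^{-\alpha}_{b-}S_k f$ directly:
\[
c_{mk}:=\bigl(D^{-\alpha}_{b-}S_k f,\,p_m\bigr)_{L_2(I,\omega)}=(-1)^m\sum_{n=0}^{k} f_n A^{\alpha,\gamma,\beta}_{mn}.
\]
Since Lemma~\ref{L3} bounds the \emph{absolute} series $\sum_n|f_n A^{\alpha,\gamma,\beta}_{mn}|$, the same $Cm^{-\lambda}$ bound holds for every partial sum, hence uniformly in $k$. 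The paper then feeds $|c_{mk}|\le Cm^{-\lambda}$ into Theorem~\ref{T2} to get an $L_q$ bound for $q<(2s-1)/(s-\lambda)$, checks that such a $q$ can be taken above the Pollard lower endpoint $(2s-1)/s$, and finishes with H\"older to pass from $L_q$ to $L_p$. In other words, the same estimate from Lemma~\ref{L3} does double duty: once for the decay hypothesis, once (via Theorem~\ref{T2}) for the uniform norm bound.

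Your triangle-inequality route is not unreasonable---since $c_n$ grows super-exponentially, the conjectured bound $\|I^{\alpha}_{b-}p_n\|_{L_p}\le Cc_n$ is very likely true with enormous slack---but you have not established it, and you would still need to check that the $k=0$ term $(b-x)^{\alpha}$ lies in $L_p(I,\omega)$ for some $p$ satisfying the Pollard condition (i.e.\ that $\alpha p+\gamma>-1$ is compatible with the Pollard range). The paper's argument sidesteps this entirely by working at the level of coefficients and invoking Theorem~\ref{T2}, which is both shorter and avoids the new computation you flag as ``the most delicate step''.
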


\begin{proof}
Due to Lemma \ref{L3}, we have
$$
\left|\sum\limits_{n=0}^{\infty} f_{n}A^{\alpha,\gamma,\beta}_{mn}\right|\leq C  m^{-2\alpha -\gamma-3/2},\,m=1,2,...\,.
$$
and it is clear that  $\lambda>1/2,$ since $ 2\alpha +\gamma+1>0.$
Thus to fulfill the conditions of Theorem \ref{T1}, we must show that
$$
\|D^{-\alpha}_{b-}S_{k}f\|_{L_{p}}\leq C,\, k=0,1,...\,,
$$
where $p$ is such an index value that the Polard condition holds. 
Consider
$$
\left(D^{-\alpha}_{b-}S_{k}f,p_{m}\right)=\sum\limits_{n=0}^{k}f_{n}\left(D^{-\alpha}_{b-}p_{n},p_{m}\right)= (-1)^{m}\sum\limits_{n=0}^{k}f_{n}A^{\alpha,\gamma,\beta}_{mn}=:c_{mk}\,.
$$
Let us impose  the conditions  on $\alpha, \gamma$  under which  the following estimate holds
\begin{equation}\label{8}
\|D^{-\alpha}_{b-}S_{k}f\|_{L_{q}}\leq C \left(\sum\limits_{m=1}^{\infty}|c_{mk}|^{q} m^{(\max\{\beta,\gamma\}+3/2)(q-2)}\right)^{1/q}<\infty ,\,q\geq2.
\end{equation}
Using Lemma \ref{1} and calculating  powers, we have the following sufficient conditions of  series \eqref{8} convergence    in terms of Theorem 1 : $q<(2s-1)/(s-\lambda).$ 
   Regarding   the fulfilment of the Polard conditions, we should notice that
   $$
    4\max\left\{\frac{\beta+1}{2\beta+3},\frac{\gamma+1}{2\gamma+3}\right\}=\frac{2s-1}{s} 
   $$
   (this is the consequence of the fact that the function $z(x):= (x+1)/ (2x+3)$ is an increasing function), hence   we can choose $p$ so that 
$$
4\max\left\{\frac{\beta+1}{2\beta+3},\frac{\gamma+1}{2\gamma+3}\right\}<p<q.
$$
Combining these facts and using the H\"{o}lder inequality,   we come to the conclusion 
\begin{equation*} 
\|D^{-\alpha}_{b-}S_{k}f\|_{L_{p}} \leq C \left(\sum\limits_{m=1}^{\infty}|c_{mk}|^{q} m^{(\max\{\beta,\gamma\}+3/2)(q-2)}\right)^{1/q}<C  ,\,k=0,1,...\,,
\end{equation*}
where index $p$  satisfies the Polard conditions. 
Hence the conditions of Theorem \ref{T1} is fulfilled. This gives us the desired result.
\end{proof}

\end{document}